\newtheorem*{thmm}{Main Theorem}
\newtheorem*{hypothesis a}{Hypothesis A}
\newtheorem{thm}{Theorem}[section]
\newtheorem{lem}[thm]{Lemma}
\newtheorem{prop}[thm]{Proposition}
\theoremstyle{definition}
\newtheorem{df}[thm]{Definition}
\newtheorem{example}[thm]{Example}
\numberwithin{equation}{section}
\newcommand{\Ad}{\textrm{ad}}
\newcommand{\D}{\textrm{dim}}
\newcommand{\Frob}{\textrm{Frob}}
\newcommand{\rb}{\overline\rho}
\newcommand{\Q}{\mathbb{Q}}
\newcommand{\Z}{\mathbb{Z}}
\newcommand{\F}{\mathbb{F}}
\newcommand{\e}{\epsilon}
\newcommand{\CD}{\mathcal{D}}
\newcommand{\CO}{\mathcal{O}}
\newcommand{\fp}{\mathfrak{p}}
\newcommand{\fq}{\mathfrak{q}}
\newcommand{\fn}{\mathfrak{n}}
\newcommand{\fr}{\mathfrak{r}}
\newcommand{\fm}[4]{\begin{pmatrix} #1 & #2 \\ #3 & #4 \end{pmatrix}}
\newcommand{\sfm}[4]{\left(\begin{smallmatrix} #1 & #2 \\ #3 & #4 \end{smallmatrix}
\right)}
\begin{document}
\renewcommand*\arraystretch{2}

 \title{Higher congruence companion forms}

 \title{Higher congruence companion forms}
\author{Rajender Adibhatla}
\address{ Fakult\"{a}t f\"{u}r Mathematik, Universit\"{a}t Regensburg,
93040 Regensburg, Germany }
                     \email{rajender.adibhatla@mathematik.uni-regensburg.de}

\author{Jayanta Manoharmayum}
\address{School of Mathematics and Statistics, University of Sheffield,
                          Sheffield S3 7RH, United Kingdom}
                          \email{j.manoharmayum@sheffield.ac.uk}


\begin{abstract}
For a rational prime $p \geq 3$ we consider $p$-ordinary,
Hilbert modular newforms $f$ of weight $k\geq 2$ with associated $p$-adic Galois
representations $\rho_f$ and mod ${p^n}$ reductions
$\rho_{f,n}$.
Under suitable hypotheses on the size of the image, we use deformation theory and
modularity lifting to show
that if the restrictions of  $\rho_{f,n} $ to
decomposition groups above $p$ split then $f$ has a companion form
$g$ modulo $p^n$ (in the sense that $\rho_{f,n}\sim \rho_{g,n}\otimes\chi^{k-1}$).

 \end{abstract}

\subjclass[2010]{11F33, 11F80}

\keywords{modular forms, Galois representations}

\maketitle

\section{Introduction}

Let $F$ be a totally real number field and let $p$ be an odd prime. Suppose we are
given a Hilbert modular newform $f$ over $F$ of level $\fn_f$, character
$\psi$  and (parallel) weight $k\geq 2$. For a prime $\fq$ not dividing
$\fn_f$, let $c(\fq,f)$ denote the eigenvalue of
the Hecke operator $T(\mathfrak{q})$ acting on $f$;
denote by $K_f$ the number field generated by the $c(\fq,f)$'s and $\psi(\Frob_
{\mathfrak{q}})$'s, and by $\mathcal{O}_f $ the integer ring of $K_f$. Then for each
prime
$\wp|p$ of $\mathcal{O}_f$  one has a continuous, odd, absolutely irreducible
representation
$\rho_{f,\wp}: G_F\longrightarrow GL_2(\mathcal{O}_{f,\wp})$ with the
following property: $\rho_{f,\wp}$ is  unramified
outside primes dividing $p\mathfrak{n}_f$, and at a prime
$\mathfrak{q}\nmid p\mathfrak{n}_f$ the characteristic polynomial of
$\rho_{f,\wp}( \Frob_{\mathfrak{q}} ) $ is
$X^2-c(\mathfrak{q},f)X+\psi(\Frob_{\mathfrak{q}})\text{Nm}(\mathfrak{q})^{k-1}$. We
denote the $p$-adic cyclotomic character by $\chi $. Thus the determinant of
$\rho_{f,\wp}$ is $\psi\chi^{k-1}$.

From here on we assume that  the character $\psi$ is unramified at $p$. Suppose that $f
$  is ordinary at $p$. Then, by Wiles \cite{W1}, and Mazur-Wiles \cite{MW}, for
every prime $\mathfrak{p}|p$ we have
 \[
 \rho_{f,\wp}|_{G_\mathfrak{p}}\sim \fm{\psi_{1\mathfrak{p}}\chi^{k-1}}{*}{0}{\psi_
{2\mathfrak{p}}}
 \]
  where
 $G_\mathfrak{p}$ is a decomposition group at $\mathfrak{p}$ and $\psi_{1\mathfrak{p}}$, $\psi_{2\mathfrak{p}}$ are unramified characters. In fact,
 with $a(\mathfrak{p},f)$ defined to be the unit root
 $X^2-c(\mathfrak{p},f)X+\psi(\Frob_{\mathfrak{p}})\text{Nm}(\mathfrak{p})^{k-1}=0$, we
 have $\psi_{2\mathfrak{p}}(\Frob_
\mathfrak{p})=c(\mathfrak{p}, f)$.
  A natural question
is to ask when  the restriction(s) $\rho_{f,\wp}|_{G_\mathfrak{p}}$ actually split.
If $\rho_{f,\wp}$ mod $\wp$ is absolutely irreducible then
the splitting (or not) of $\rho_{f,\wp}|_{G_\mathfrak{p}}\mod{\wp^n}$ is independent of the  choice of a
lattice used to define $\rho_{f,\wp}$. This is explained in greater detail at the beginning of Section 3.

Now suppose we are given a second newform $g$ which is also  ordinary at $p$. Fix a $p
$-adic integer ring $\mathcal{O}$ in which
$\mathcal{O}_f$ and $\mathcal{O}_g$ embed. Given a non-zero, non-unit $\pi\in \CO$, we say that
 $g$ is a \emph{mod $\pi$ weak companion form} for $f$  if
 $c(\mathfrak{q},f)\equiv c(\mathfrak{q},g)\text{Nm}(\mathfrak{q})^{k-1}\mod{\pi}$
 for all but finitely many primes
$\mathfrak{q}$.  If the residual representations are absolutely irreducible---which will be the case in this article---then $g$ being a mod $\pi$ weak companion form to $f$ is equivalent to  $\rho_f\sim \rho_g\otimes\chi^{k-1}\mod\pi$ where $\rho_f,\rho_g:G_F\longrightarrow GL_2(\mathcal{O})$ are the  $p$-adic Galois representations associated to $f, g$. By applying the determinant condition to this companionship criterion, we see that the weight $k'$ of $g$ satisfies a  congruence
 $\chi^{k'-1}\equiv \chi^{1-k}\mod{\pi}$ on each decomposition group above $p$. Note
 that we do not enforce any optimality requirement on the level of $g$ (and hence the
 prefix  `weak').

Classically, companion forms mod $p$  played an important part in
the weight optimisation part of Serre's Modularity Conjecture. Serre's predicted
equivalence between local splitting for the residual modular representation (tame
ramification)
and the existence
of companions  was established by Gross in \cite{Gross}.
In much the same spirit, the main result of this paper, which we now state, proves the
equivalence between splitting mod ${p^n}$ and the existence of mod ${p^n}$ weak
companion forms.

\begin{thmm}\label{maint} Let $F$ be a totally real number field, $p$ be an odd
prime unramified in $F,$ and let $f$ be a $p$-ordinary Hilbert modular newform $f$
of  squarefree level $\mathfrak{n}$,  character $\psi$ with order
coprime to $p$ and
unramified at $p$, and weight $k\geq 2$.
Let $n\geq 2$ and  set  $\pmb{k}:=\mathcal{O}_f/\wp$,  $\rho_{f,n}:=\rho_{f,\wp}\mod{p^n}$, $\rb_f:=
\rho_{f,\wp}\mod{\wp}$. Assume the following
hypotheses:
\begin{itemize}
\item Global conditions.
\begin{enumerate}
\item[(GC1)] $\rho_{f,n}$ takes values in
$GL_2(W/p^n)$ where $W$ is the Witt ring of $\pmb{k}:=\mathcal{O}_f/\wp$  (under the
natural injection  $W/p^n\hookrightarrow \mathcal{O}_f/p^n$).
\item[(GC2)] The image of $\rb_f$ contains $SL_2(\pmb{k})$.
Furthermore, if $p=3$ then the image $\rho_{f,n}$ contains a transvection $\sfm{1}{1}
{0}{1}$.
\end{enumerate}

\item Local conditions.
\begin{enumerate}
\item[(LC1)] If $\fp$ is a prime dividing $p$ then $c(\fp,f)^2\not\equiv \psi(\Frob_\fp)\mod{\wp}$.
\item[(LC2)] Let $\mathfrak{q}$ be a prime dividing the level $\mathfrak{n}$ where
$\rb_f$ is unramified. If $\textrm{Nm}(\mathfrak{q})\equiv 1\mod{p}$
then $p$ divides the order of $\rb_f(Frob_\mathfrak{q})$.

\end{enumerate}
\end{itemize}
Let $k'\geq 2$ be the smallest integer such that $k+k'\equiv 2\mod{(p-1)p^{n-1}}$.
Then $f$ splits  mod ${p^n}$ if and only if it has a $p$-ordinary mod $p^n$ weak
companion form $g$
of weight $k'$ and character $\psi$.
\end{thmm}

The proof, given in section \ref{modular lift}, relies on being able to adapt Taylor's generalisation, \cite{Tay1}, of Ramakrishna's methods, \cite{Ram}, to lift
$\rho_{f,n}\otimes\chi^{1-k}$ to characteristic $0$ with prescribed local properties. Modularity of this lift is established by using results of Skinner and Wiles in
\cite{SW1} along with the existence of companion forms mod $p$ over totally real
fields due to Gee ( \cite[Theorem 2.1]{Gee}). The construction of characteristic $0$
lifts for certain classes of mod ${p^n}$ representations is carried out in section
\ref{lifting to W}. (See Theorem \ref{W lift 2} for the statement.)

The Main Theorem, in practice, is not useful for checking when a given newform
fails to split mod ${p^n}$ because we have very little control over the level of
the weak companion form.  However, as we show in Section 4, in the situation when the dimension of the tangent space $\mathbf{t}_\mathcal{D}$ (associated to a deformation condition $\mathcal{D}$ of $\rb$) is $0$, we can use higher companion forms to computationally verify a conjecture of Greenberg connecting local splitting with complex multiplication. We conclude by giving examples in support of this conjecture.

\section{Toolkit}

The method we use for obtaining a fine structure on deformations of a mod ${p^n}$
representation is an adaptation of the more familiar mod $p$ case and has two key
components:  the existence of sufficiently well behaved local deformations; and, for
the existence of characteristic $0$ liftings, being able to place local constraints so
that the dual Selmer group vanishes. Naturally, both of these present difficulties in
the general mod ${p^n}$ case. In this section, we discuss the tools that will enable us
to manage the difficulties for certain classes of mod ${p^n}$ representations.

Throughout this section $p$ is an odd prime, $\pmb{k}$ is a finite field of
characteristic $p$ and
$W$ is the Witt ring of $\pmb{k}$.

\subsection{Deformations and substantial deformation conditions}
 In the main, we  follow Mazur's treatment of deformations and deformation conditions
in \cite{mazur2}.
 Given a residual representation, a deformation condition is simply a collection of
liftings satisfying some additional
 properties (closure under projections, a Mayer-Vietoris property etc).  The
fundamental consequence then is the
 existence of a (uni)versal deformation.

We expand on this: Suppose we are given  a `nice' profinite group $\Gamma$, and a
continuous representation
$\overline\rho:\Gamma\longrightarrow GL_2(\pmb{k})$. If $\mathcal{D}$ is a deformation
condition for
$\overline\rho$ then there is a complete local Noetherian $W$-algebra $R$ with residue
field $\pmb{k}$ and a lifting
$\rho:\Gamma\longrightarrow GL_2(R)$ in $\mathcal{D}$ with the following property:
If $\rho':\Gamma\longrightarrow GL_2(A)$ is a lifting of $\overline\rho$ in
$\mathcal{D}$ then there is a morphism
$R\longrightarrow A$ which gives, on composition with $\rho$, a representation strictly
equivalent to $\rho'$.
In addition, we require that the morphism above is unique when $A$ is the ring of dual
numbers
$\pmb{k}[\epsilon]/(\epsilon^2)$. If the projective image of $\overline\rho$ has
trivial centralizer then $R$,
together with $\rho$, represents the functor that assigns type $\mathcal{D}$
deformations to a coefficient ring.
We shall use the natural identification of  the tangent space
$\mathbf{t}_\mathcal{D}$ with a subspace of
$H^1(\Gamma, \Ad\overline\rho)$ (and as a subspace of
$H^1(\Gamma, \Ad^0 \overline\rho)$ when considering deformations
with a fixed determinant).
The (uni)versal
deformation ring $R$ then has a presentation $W[[T_1,\ldots ,T_n]]/J$ where
$n=\D_{\pmb{k}}\mathbf{t}_\mathcal{D}$. We will be particularly interested in smooth
deformation conditions
(so the ideal of relations $J$ will be (0)).

As hinted in the beginning of this section, the method we use for constructing smooth global deformation conditions depends upon
being able to find
  local (uni)versal deformation rings smooth in a large number of variables.  It will
be convenient to make the
  following definition:
\begin{df} Let $F$ be a local field and let
   $\overline\rho:G_F\longrightarrow GL_2(\pmb{k})$ be a residual representation.

 We  call a deformation condition  for $\overline\rho$ with fixed determinant a
\textit{substantial deformation condition}
 if it is smooth and its tangent space $\mathbf{t}$ satisfies the inequality
 \[
 \dim_{\pmb{k}}\mathbf{t}\geq\dim_{\pmb{k}}H^0(G_F,\Ad^0\overline\rho)+[F:\Q_p]\delta
 \]
 where $\delta$ is 1 when $F$ has residue characteristic $p$ and 0 otherwise.

Accordingly, any lift   $\rho: G_F\longrightarrow GL_2(A)$ of $\overline\rho$  in this collection of liftings will be called  a \textit{substantial deformation.}

\end{df}

We now give examples of substantial deformation conditions. (The reader may compare these with examples E1 -- E4 in \cite{Tay1}.) From here on, for the rest of the section, $F$ is a  finite extension of $\Q_l$ for some prime $l$. As in the definition above,
let  $\overline\rho:G_F\longrightarrow GL_2(\pmb{k})$ be a residual representation.

 \begin{example}\label{E1} Assume that the residue characteristic of $F$ is different from $p
$. Suppose that the order of $\rb(I_F)$ is co-prime to $p$, and let $d:G_F
\longrightarrow W^\times$ be
 a  character lifting $\textrm{det}\overline\rho$. The collection of liftings of $\rb$
which factor
 through $G_F/(I_F \cap \mathrm{ker}\rb)$ and have determinant $d$ is a substantial
deformation condition.
 The tangent space has dimension $\D_{\pmb{k}}H^0(G_F,\Ad^0\rb)$.
\end{example}

\begin{example}\label{E2} Suppose that
\[
\rb\sim \fm{\overline\chi  }{ \ast }{ 0 }{1}\overline\varepsilon
\]
for some character
$\overline\varepsilon:G_F\longrightarrow \pmb{k}^\times$. Moreover, assume that if $
\overline\rho$ is semi-simple then
$\overline\chi$ is non-trivial. Fix a character $\varepsilon:G_F\longrightarrow W^
\times$ lifting $\overline\varepsilon$.
Then the collection of liftings strictly equivalent to
\[
\fm{\chi}{\ast }{0}{1}\varepsilon
\]
is a substantial deformation
condition.
Note that $\rb$ is equivalent to a representation of the form  considered above only if
$p$ divides the order of
 $\rb(I_F)$. (See Example 3.3 of \cite{JM}.)
\end{example}

\begin{example}\label{wt k liftings}
We now assume that the residue characteristic of $F$ is $p$.
Suppose we are given an integer $k\geq 2$ and a representation $\rb :G_F\longrightarrow
GL_2(\pmb{k})$  such that
\[
\rb=\fm{\overline\chi^{k-1}\overline\psi_1 }{*}{0}{\overline\psi_2}
\]
where $\overline\psi_1,\overline\psi_2$ are  unramified characters. Let  $\psi$ be the Teichm\"{u}ller
lift of $\overline\psi_1\overline\psi_2$.
If $A$ is a coefficient ring, we shall call a lifting $\rho_A: G_F\longrightarrow GL_2
(A)$ of
$\rb$ a {\em $\overline\psi_2$-good weight $k$ lifting with character $\psi$} if $
\rho_A$ is strictly equivalent to a representation of the form
\[
\fm{\widetilde{\psi_1}\chi^{k-1}} {\ast} { 0} {\widetilde{\psi_2}}
\]
 for some unramified characters
$\widetilde{\psi_1}, \widetilde{\psi_2}:G_F\longrightarrow A^\times$ lifting
$\overline\psi_1, \overline\psi_2$ and $\widetilde{\psi_1} \widetilde{\psi_2}=\psi$.

\end{example}

We then have the following property of weight $k$ liftings (proof immediate, Example
3.4 in \cite{JM}):

\begin{prop}\label{weight k}
Let $\overline\rho: G_F\longrightarrow GL_2(\pmb{k})$ be as above in Example \ref{wt k
liftings}, and further assume that
$\overline\chi^{k-1}\psi_1\neq \overline\chi\psi_2$. Then the deformation condition
consisting of weight $k$ liftings of $\rb$
is a smooth deformation condition. The dimension of its tangent space is equal to
$[F:\Q_p]+\D_{\pmb{k}}H^0(G_F,\Ad^0\rb)$.
\end{prop}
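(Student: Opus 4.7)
The plan is to verify Mazur's smoothness criterion and then count the tangent space, with both steps reducing to local cohomological calculations. The key input in each step is the vanishing $H^2(G_F, \pmb{k}(\overline\eta))=0$, where $\overline\eta := \overline\chi^{k-1}\overline\psi_1\overline\psi_2^{-1}$: by local Tate duality this $H^2$ is dual to $H^0(G_F, \pmb{k}(\overline\chi\,\overline\eta^{-1}))$, which is zero precisely because the hypothesis $\overline\chi^{k-1}\psi_1 \neq \overline\chi\psi_2$ says $\overline\chi\,\overline\eta^{-1} \neq 1$.

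For smoothness, I would take a small surjection $A \twoheadrightarrow A/I$ (with $I \cdot \mathfrak{m}_A = 0$) and a weight $k$ lifting $\rho_{A/I}$ in canonical form $\sfm{\widetilde\psi_1\chi^{k-1}}{\widetilde b}{0}{\widetilde\psi_2}$, and produce a weight $k$ lift to $A$ in two steps. First, lift the diagonal: each $\widetilde\psi_i$ factors through $G_F/I_F \cong \widehat{\Z}$, so it suffices to lift its value at Frobenius along the surjection $A^\times \twoheadrightarrow (A/I)^\times$, and then the constraint $\widetilde\psi_1\widetilde\psi_2 = \psi$ pins down $\widetilde\psi_2$ from $\widetilde\psi_1$. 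Second, lift the upper-right entry $\widetilde b$ as a $1$-cocycle with coefficients in the appropriate rank-one $A$-module; the obstruction lies in $H^2(G_F, I \otimes \pmb{k}(\overline\eta))$, which vanishes by the preceding paragraph.

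For the tangent space, I would parameterize weight $k$ deformations to $\pmb{k}[\epsilon]/(\epsilon^2)$ by a pair $(\alpha, \beta)$, where $\alpha \in \textrm{Hom}(G_F/I_F, \pmb{k})$ records the first-order perturbation of the unramified diagonal (contributing a line) and $\beta$ records the perturbation of the upper-right entry, varying in a torsor under $Z^1(G_F, \pmb{k}(\overline\eta))$. The local Euler characteristic formula combined with $H^2(G_F, \pmb{k}(\overline\eta)) = 0$ gives $\D_{\pmb{k}} H^1(G_F, \pmb{k}(\overline\eta)) = [F:\Q_p] + \D_{\pmb{k}} H^0(G_F, \pmb{k}(\overline\eta))$, whence $\D_{\pmb{k}} Z^1(G_F, \pmb{k}(\overline\eta)) = [F:\Q_p] + 1$. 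An orbit--stabilizer count for the conjugation action of $1 + \epsilon M_2(\pmb{k})$ on the set of weight $k$ lifts (whose infinitesimal stabilizer at $\overline\rho$ is $H^0(G_F, \Ad\,\overline\rho)$) then collapses the total to $[F:\Q_p] + \D_{\pmb{k}} H^0(G_F, \Ad^0\overline\rho)$.

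The main obstacle is the tangent-space bookkeeping: diagonal strict equivalences shift $\beta$ by a multiple of the extension class $[\overline b/\overline d] \in H^1(G_F, \pmb{k}(\overline\eta))$, and this class is zero or nonzero according as $\overline\rho$ is split or non-split. The resulting dichotomy mirrors the corresponding behaviour of $\D_{\pmb{k}} H^0(G_F, \Ad^0\overline\rho)$, so the contributions line up case by case and yield the stated uniform formula.
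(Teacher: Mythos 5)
The paper does not actually supply a proof of this proposition; it is flagged ``proof immediate, Example 3.4 in \cite{JM}'', so there is no in-text argument to compare against. Your overall strategy (unobstructedness via $H^2$-vanishing, then an explicit parameterization of the $\pmb{k}[\epsilon]$-lifts) is plausibly the same as the computation in the reference.

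Your smoothness argument is correct, and you have identified the single cohomological input exactly: $\overline\chi^{k-1}\psi_1\neq\overline\chi\psi_2$ is precisely the condition that $\overline\chi\overline\eta^{-1}\neq 1$, which by local Tate duality gives $H^2(G_F,\pmb{k}(\overline\eta))=0$; the unramified diagonal characters lift freely along any small surjection, and the off-diagonal obstruction then dies in $H^2(G_F,I\otimes\pmb{k}(\overline\eta))$.

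The tangent-space count is where there is a genuine gap. Your parameterization of the canonical-form $\pmb{k}[\epsilon]$-lifts has dimension $1+\dim_{\pmb{k}} Z^1(G_F,\pmb{k}(\overline\eta))=[F:\Q_p]+2$ (using the Euler characteristic and $H^2=0$, as you say). But the orbit--stabilizer step, taken literally with the group $1+\epsilon M_2(\pmb{k})$ and stabilizer $H^0(G_F,\Ad\overline\rho)$, produces
\[
[F:\Q_p]+2-\bigl(4-\dim_{\pmb{k}}H^0(G_F,\Ad\overline\rho)\bigr)=[F:\Q_p]-1+\dim_{\pmb{k}}H^0(G_F,\Ad^0\overline\rho),
\]
which is short of the target by $1$. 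The reason is that the full orbit of $1+\epsilon M_2(\pmb{k})$ does not remain inside your parameter space: conjugating by $1+\epsilon X$ with $X=\sfm{x}{y}{z}{-x}$ shifts the cocycle by a coboundary whose lower-left entry is (up to a unit) $z(\overline\eta(g)^{\pm 1}-1)$, which is nonvanishing once $z\neq 0$ and $\overline\eta\neq 1$. Thus the only $X$ that move one canonical-form lift to another are the upper-triangular ones, a $3$-dimensional space, and the correct ``effective'' orbit dimension is $3-\dim_{\pmb{k}}H^0(G_F,\Ad\overline\rho)=2-\dim_{\pmb{k}}H^0(G_F,\Ad^0\overline\rho)$, which recovers the stated formula. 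Your closing paragraph correctly observes that the diagonal part of $X$ shifts $\beta$ by the extension class $[\overline b/\overline d]$ and that this dichotomy tracks the split/non-split cases of $H^0(\Ad^0\overline\rho)$; but the missing unit of dimension does not come from there. It comes from excluding the strictly lower-triangular part of $X$ at the outset, and without that observation the argument as written does not reach $[F:\Q_p]+\dim_{\pmb{k}}H^0(G_F,\Ad^0\overline\rho)$.

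Two smaller points worth flagging. First, the fact that the fiber of $\beta$-choices over each $\alpha$ is nonempty (so that your parameter space really has the claimed dimension) again uses $H^2(G_F,\pmb{k}(\overline\eta))=0$: one must lift the extension class of $\overline\rho$ along $H^1(G_F,\pmb{k}[\epsilon](\eta))\rightarrow H^1(G_F,\pmb{k}(\overline\eta))$. Second, if $\overline\eta=1$ (i.e.\ $\overline\chi^{k-1}\psi_1=\psi_2$, which the stated hypothesis does not forbid), the $z$-constraint disappears and the bookkeeping changes; this degenerate case is implicitly excluded where the proposition is applied (hypothesis (ii) of Proposition \ref{modular lift 1} forces $\overline\eta\neq 1$), and your argument should either assume it or treat it separately.
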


\subsection{Subgroups of $GL_2(W/p^n)$} \label{subgroups}

We now derive some properties of certain subgroups of $GL_2(W/p^n)$ which will be of
relevance in constructing global deformations. Let's recall that $p$ is an odd prime,
and that $\pmb{k}$ is the residue field $W/p$. We denote by $\Ad^0$ the the vector
space of trace $0$ $2\times 2$-matrices over
 $\pmb{k}$ with $GL_2(W/p^n)$ acting by conjugation,
and by $\Ad^0(i)$ its twist by the $i$-th power of the determinant.
For convenience,
we record the following useful identity
\begin{equation}
\label{conj}
\fm{1}{x}{0}{1}\fm{a}{b}{c}{-a}\fm{1}{-x}{0}{1}=
\fm{a+cx}{b-2ax-cx^2}{c}{-a-cx}.
\end{equation} 

\begin{lem} \label{subgroup 1}For $p \geq 3$,

(a)  $H^1(GL_2(\pmb{k}),\Ad^0(i))=0$ if $i=0,1$ and $\pmb{k} = \F_5$

(b)  $H^1(SL_2(\pmb{k}),\Ad^0(i))=0$ if $i=0,1$ and $\pmb{k} \neq \F_5$.

\end{lem}

\begin{proof}
The claim is well known when $\pmb{k}\neq\F_5$ and $i=0$---see Lemma 2.48 of \cite{DDT}, for instance.  The proof, in general, is a simple exercise following the proof of Lemma 1.2 of \cite{Flach}. \end{proof}

\begin{prop} \label{subgroup 2} Let $G$ be a subgroup of $GL_2(W/p^n)$. Suppose the mod
$p$ reduction of $G$ contains $SL_2(\pmb{k})$.
Furthermore, assume that if $p=3$ then $G$ contains a transvection $\sfm {1}{1}{0}{1}$.
Then the following statements hold.
\begin{enumerate}
\item[(a)] $G$ contains $SL_2(W/p^n)$.
    \item[(b)] Suppose that $p\geq 5$. If $\pmb{k}=\F_5$ assume further that $G\mod
{5}=GL_2(\F_5)$.
    Then $H^1(G,\Ad^0(i))=0$ for $i=0,1$.
        \item[(c)] The restriction map
        $H^1\left(G,\Ad^0(i)\right)\longrightarrow H^1\left(\sfm{1}{1}{0}{1} , \Ad^0(i)
\right)$ is an injection
        (for all $p\geq 3$).
            \end{enumerate}
\end{prop}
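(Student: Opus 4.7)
The three parts are proved in sequence.

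For part (a), I would argue by induction on $n$, with $n = 1$ being the hypothesis. For the inductive step, let $N := G \cap \ker\bigl(SL_2(W/p^n) \to SL_2(W/p^{n-1})\bigr)$; this kernel is canonically isomorphic to $\Ad^0$ via $I + p^{n-1}X \leftrightarrow X$, with the conjugation action of $G$ factoring through the adjoint action of $SL_2(\pmb{k})$. So $N$ is an $SL_2(\pmb{k})$-submodule of $\Ad^0$, and the step reduces to two facts: (i) $\Ad^0$ is a simple $SL_2(\pmb{k})$-module for every $p \geq 3$, which follows from the non-degeneracy of the trace pairing on $\mathfrak{sl}_2(\pmb{k})$; and (ii) $N \neq 0$. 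For $p = 3$, the transvection hypothesis puts $u = \sfm{1}{1}{0}{1}$ in $G$, so $u^{3^{n-1}} = \sfm{1}{3^{n-1}}{0}{1}$ lies in $N$ and is nonzero. For $p \geq 5$, take any lift $\tilde u \in G$ of the mod-$p$ transvection; using $(\tilde u - I)^2 \in pM_2(W/p^n)$ together with the divisibility $p \mid \binom{p}{k}$ for $1 \leq k \leq p-1$, one checks by induction on $j$ that $\tilde u^{p^j} \equiv \sfm{1}{p^j}{0}{1} \pmod{p^{j+1}}$, so $\tilde u^{p^{n-1}}$ is the required nontrivial element of $N$.

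For part (b), I would apply the Lyndon--Hochschild--Serre spectral sequence to the normal subgroup $SL_2(W/p^n) \trianglelefteq G$ (normal as the kernel of $\det$). Since $(\Ad^0)^{SL_2(\pmb{k})} = 0$, the $E_2^{1,0}$ term vanishes, and
\[
H^1\bigl(G,\, \Ad^0(i)\bigr) \;\cong\; H^1\bigl(SL_2(W/p^n),\, \Ad^0(i)\bigr)^{G/SL_2(W/p^n)}.
\]
It then suffices to show $H^1(SL_2(W/p^n), \Ad^0) = 0$. A second Hochschild--Serre on the normal pro-$p$ subgroup $K_0 := \ker(SL_2(W/p^n) \to SL_2(\pmb{k}))$ embeds this group into $\mathrm{Hom}_{SL_2(\pmb{k})}(K_0^{\mathrm{ab}}, \Ad^0)$, after noting that the inflation term $H^1(SL_2(\pmb{k}), \Ad^0)$ vanishes by Lemma~2.48 of \cite{DDT} for $\pmb{k} \neq \F_5$, and by Lemma \ref{subgroup 1} (via the extra hypothesis $G \bmod 5 = GL_2(\F_5)$) for $\pmb{k} = \F_5$. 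A filtration analysis identifies $K_0^{\mathrm{ab}}$ with $\Ad^0$; Schur's lemma then leaves at most a one-dimensional space of candidate classes, and showing the transgression $\mathrm{Hom}_{SL_2(\pmb{k})}(K_0^{\mathrm{ab}}, \Ad^0) \to H^2(SL_2(\pmb{k}), \Ad^0)$ is nonzero kills this last obstruction.

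For part (c), suppose $\xi \in Z^1(G, \Ad^0(i))$ has trivial restriction to $\langle u \rangle$. Since $H^1(\langle u \rangle, \Ad^0(i)) = \Ad^0(i)/(u-1)\Ad^0(i)$, we may subtract a coboundary to arrange $\xi(u) = 0$. The cocycle identity then gives $\xi(huh^{-1}) = (1 - huh^{-1})\xi(h)$ for every $h \in G$, expressing $\xi$ on conjugates of $u$ in terms of its values on $G$. Since transvections generate $SL_2(\pmb{k})$, and part (a) gives $G \supseteq SL_2(W/p^n)$, combining this with the identity \eqref{conj} pins down $\xi|_{SL_2(W/p^n)}$ as a coboundary. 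Finally, $(\Ad^0(i))^{SL_2(\pmb{k})} = 0$ together with normality of $SL_2(W/p^n)$ in $G$ lets one propagate the vanishing to all of $G$ via the relation $\xi(kg) = \xi(k) + k\xi(g)$ for $k \in SL_2(W/p^n)$.

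The principal obstacle lies in part (b), and specifically in showing the transgression above is nonzero: by Schur's lemma there is a one-dimensional $\mathrm{Hom}$-space of potential classes, so the vanishing is an honest cohomological computation rather than a formal consequence. A secondary difficulty in (c) is verifying that conjugates of $u$ together with cocycle manipulations really determine $\xi$ throughout $SL_2(W/p^n)$ (and thereafter on $G$), for which the full generation statement of part (a) is essential.
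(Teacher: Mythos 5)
Your proof of part (a) is correct and mirrors the paper's: both reduce to showing that the kernel $N$ of reduction to $GL_2(W/p^{n-1})$ meets $SL_2(W/p^n)$ in the full $\Ad^0$, and both use the same two ingredients --- simplicity of $\Ad^0$ as an $SL_2(\pmb{k})$-module, and an explicit order calculation showing that a lift of the transvection has order $p^n$ rather than $p^{n-1}$. The paper phrases this as a dichotomy (``either $G=SL_2(W/p^n)$ or the reduction map is an isomorphism'') and rules out the second alternative; you exhibit a nonzero element of $N$ directly. Same content.

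Your part (b) has a genuine gap, and you rightly flag it yourself: after the two Hochschild--Serre reductions you are left with $\mathrm{Hom}_{SL_2(\pmb{k})}(K_0^{\mathrm{ab}},\Ad^0)$, which is one-dimensional by Schur, and you assert without proof that the transgression into $H^2(SL_2(\pmb{k}),\Ad^0)$ is nonzero. That is exactly the missing content, and it is not a formality. (Identifying $K_0^{\mathrm{ab}}$ with $\Ad^0$ also requires a commutator computation you have not supplied.) The paper avoids the transgression entirely by inducting one layer at a time: given $0\neq\xi\in H^1(G,\Ad^0(i))$ with $H^1(G\bmod p^{n-1},\Ad^0(i))=0$, the restriction of $\xi$ to the last-layer kernel $H$ is a nonzero $SL_2$-equivariant homomorphism, hence an isomorphism onto $\Ad^0$. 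Feeding this into $g\mapsto (I+\e\xi(g))(g\bmod p^{n-1})$ gives an isomorphism $SL_2(W/p^n)\to SL_2(W^\nu)$ with $W^\nu=W/p^{n-1}\oplus\pmb{k}\e$ ($\e^2=p\e=0$), and one then compares orders of preimages of the transvection exactly as in part (a) to get a contradiction. In other words, the paper recycles the elementary group-order argument in place of a cohomological edge-map computation.

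Your part (c) also has a gap at the pivotal step: after arranging $\xi(u)=0$ and deriving $\xi(huh^{-1})=(1-huh^{-1})\xi(h)$, you say this ``pins down $\xi|_{SL_2(W/p^n)}$ as a coboundary.'' Determining $\xi$ on the normal closure of $u$ does not by itself make it cohomologically trivial; you would still need to produce a cobounding element and verify it, and that is precisely where the work is. The paper's argument is shorter and different: triviality of $\xi$ on $\langle u\rangle$ forces triviality on $\langle u^{p^{n-1}}\rangle\subseteq N:=\ker(G\to G\bmod p^{n-1})$, so the $SL_2(\pmb{k})$-equivariant homomorphism $\xi|_N:N\to\Ad^0(i)$ has nontrivial kernel and therefore vanishes by Schur. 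Thus $\xi$ inflates from $G\bmod p^{n-1}$ and one descends to $n=1$, where the claim is the vanishing $H^1(SL_2(\pmb{k}),\Ad^0)=0$ for $\pmb{k}\neq\F_5$, and for $\pmb{k}=\F_5$ the observation that $\langle u\rangle$ is a Sylow $5$-subgroup, so restriction to it is injective on $5$-primary cohomology.
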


\begin{proof} 

 \noindent {\it Part (a).} We shall only verify that if $G$ is a subgroup of $SL_2(W/p^n)$ whose mod $p^{n-1}$
reduction is
$SL_2(W/p^{n-1})$ then $G=SL_2(W/p^n)$. The kernel of the reduction map
$G\longrightarrow SL_2(W/p^{n-1})$ consists of matrices of the form $I+p^{n-1}A$ with
$A$ an element of some additive
 subgroup of $\Ad^0$ stable under the action of $G$. Consequently either $G=SL_2(W/p^n)
$ or else the reduction map
  $G\longrightarrow SL_2(W/p^{n-1})$ is an isomorphism. We will now discount the second possibility. So suppose that $G\longrightarrow SL_2(W/p^{n-1})$ is an isomorphism. For $p \geq 5$ it follows from  \cite{Se}, IV-23 Lemma 3, that $G \subset  GL_2(W/p^n)$ contains a transvection  $\sfm{1}{1}{0}{1}$ (simply consider $G \cap GL_2(\Z/p^n)$). For $p = 3$ this is true by hypothesis. However, $\sfm{1}{1}{0}{1}$ has order $p^n$ in G and order $p^{n-1}$ in $SL_2(W/p^{n-1})$---a contradiction.

\medskip
\noindent {\it Part (b).}  The hypothesis and Lemma \ref{subgroup 1} implies that $H^1\left( G\mod{p}, \Ad^0(i)
\right)=0$. So let's assume that
$n\geq 2$ and that $H^1\left(G\mod{p^{n-1}},\Ad^0(i)\right)=0, $ and suppose  that
$0\neq \xi\in H^1(G,\Ad^0(i))$.
Then the restriction $\xi$ to
 $H:=\ \text{ker}\left(G\longrightarrow G\mod{p^{n-1}}\right)$ is a
 group homomorphism compatible with the action of $SL_2(W/p^{n-1})$.
 It follows from part (a) that $H$ is in fact
 $\text{ker}\left(SL_2(W/p^n)\longrightarrow SL_2(W/p^{n-1})\right)$.
  Since  $H$ is naturally identified with $\Ad^0$,  it follows that $\xi\vert_H$ is an
  isomorphism. Let's denote by $W^\nu$ the ring $W/p^{n-1}\oplus \pmb{k}\e$ where
  $\e^2=p\e=0$. (Or equivalently $W^\nu\cong W[\e]/(p^{n-1}, \e^2, p\e)$.) We then see
  that the homomorphism
  $SL_2(W/p^n)\longrightarrow SL_2(W^\nu)$ given by
\[
g\longrightarrow \left(I+\e\xi(g)\right)(g\mod{p^{n-1}})
\]
is an isomorphism.
To finish off, we proceed as in part (a): The transvection
 $\sfm {1}{1}{0}{1}\in  SL_2(W^\nu)$ has order $p^{n-1}$
  while its pre-image
 in $SL_2(W/p^n)$, a matrix of the form
 $(I+p^{n-1}\sfm {a}{b}{c}{-a})\sfm {1}{1}{0}{1}$,  has order $p^n$.

\medskip \noindent {\it Part (c).} Suppose $0\neq \xi\in H^1(G,\Ad^0(i))$ restricts
to a trivial cohomology class in
$H^1\left(\sfm{1}{1}{0}{1}, \Ad^0(i)\right)$. Then the restriction of
 $\xi$ to $\sfm {1}{p^{n-1}}{0}{1}$ is trivial.
 Set
 $N:=\ \text{ker}\left(G\longrightarrow G\mod{p^{n-1}}\right)$. Then $\xi\vert_N$ has a
non-trivial kernel, and hence $\xi|_N$ is trivial.
 Thus $\xi$ is  a non-zero element of
$H^1(G\mod{p^{n-1}},\Ad^0(i))$. We are thus reduced to the case when $n=1$. Now
$H^1(SL_2(\pmb{k}), \Ad^0)=0$ except
 when $\pmb{k}=\F_5$,  so we are reduced to the case when $G$ is a subgroup of
 $GL_2(\F_5)$ containing $SL_2(\F_5)$.
 But in this case $\left(\sfm{1}{1}{0}{1}\right)$ is the Sylow 5-subgroup of $G$, and
hence if
  $\xi|_{\sfm{1}{1}{0}{1}}=0$ then $\xi =0$.
\end{proof}

\section{Constructing characteristic $0$ lifts of mod ${p^n}$ Galois representations}
Firstly, we elaborate on the remark made in the introduction that if $\rho_{f,\wp}$ mod $\wp$ is absolutely irreducible then
the splitting (or not) of $\rho_{f,\wp}\mod{\wp^n}$ is independent of the  choice of a
lattice used to define $\rho_{f,\wp}$.
Indeed, if for some $M\in GL_2(K_{f,\wp})$ the
conjugate $M\rho_{f,\wp}M^{-1}$ is integral
and stabilises the upper triangular decomposition group $G_\mathfrak{p}$, then $M$ is a
scalar multiple of $\sfm{u}{v}{0}{1}$ where $u\equiv 1\mod{\wp}$ and $v\equiv 0\mod
{\wp}$. If we denote by
$c_\mathfrak{p}\in H^1(G_\mathfrak{p}, \mathcal{O}_{f,\wp}(\psi_{1\mathfrak{p}}\psi_
{2\mathfrak
{p}}^{-1}\chi^{k-1}))$ the cohomology class for $\rho_{f,\wp}|_{G_\mathfrak{p}}$, then
the
cohomology class for the extension at $\mathfrak{p}$ determined by
$M\rho_{f,\wp}M^{-1}$ is $uc_\mathfrak{p}$. Hence, if $\rho_{f,\wp}$ mod $\wp$ is
absolutely irreducible we can speak of $\rho_{f,\wp}\mod{ \wp^n}$ being split without
any ambiguity.

We can now formulate precise conditions under which a given mod ${p^n}$ Galois representation can be lifted to characteristic $0$, and use the lifts  constructed to prove the existence of weak companion forms.

\subsection{Deformations of mod ${p^n}$ representations to $W(k)$}
\label{lifting to W}

We now suppose we are given a totally real number field $F$  and  continuous
odd representations $\rb:G_F\longrightarrow GL_2(\pmb{k}),\ \rho_n:G_F\longrightarrow
GL_2(W/p^n), n\geq 2$,
with $\rb=\rho_n\mod{p}$.    We shall also assume that the $\rb, \rho_n$ satisfy the
following.
\begin{hypothesis a}\label{hypothesis a}
The image of $\rb$ contains $SL_2(\pmb{k})$. Furthermore,  if $p=3$ then the image
 of $\rho_n$ contains the transvection $\sfm{1}{1}{0}{1}$.\end{hypothesis a}

Fix a character $\epsilon:G_F\longrightarrow W^\times$ lifting the determinant of
 $\rho_n$.
We wish to consider global deformation conditions $\mathcal{D}$ for $\rb$ with
 determinant $\epsilon$ such that $\rho_n$ is a deformation of type $\mathcal{D}$. We
shall
abbreviate this and call $\mathcal{D}$ a deformation condition for $\rho_n$. Except for
a change in choice of lettering for primes of $F$  we keep
the notation of \cite{JM}. Thus $\mathcal{D}_\fq$ is the local component at a prime
$q$,
$\mathbf{t}_{\mathcal{D}_\fq}$ is the tangent space there, and
$\mathbf{t}_{\mathcal{D}_\fq}^\perp\subseteq H^1(G_{F_{\fq}},\Ad^0\rb(1))$ is the orthogonal complement of $\mathbf{t}_{\mathcal{D}_\fq}$ under the pairing induced by
 \[
 \Ad^0\rb\times \Ad^0\rb(1)\xrightarrow{\text{trace}}\pmb{k}(1).
 \]
 The tangent space for
$\mathcal{D}$ is the Selmer group
$H^1_{\{\mathbf{t}_{\mathcal{D}_\fq}\}}(F,\Ad^0\rb)$; the dual Selmer group
$H^1_{\{\mathbf{t}_{\mathcal{D}_\fq}^\perp\}}(F,\Ad^0\rb(1))$ is determined by the local conditions $\mathbf{t}_{\mathcal{D}_\fq}^\perp$. (See for instance \cite[Definition 8.6.19]{NSW}.) We also set
\[
\delta(\mathcal{D}):=
\text{dim}_{\pmb{k}}H^1_{\{\mathbf{t}_{\mathcal{D}_\fq}\}}(F,\Ad^0\rb)-
\text{dim}_{\pmb{k}}H^1_{\{\mathbf{t}_{\mathcal{D}_\fq}^\perp\}}(F,\Ad^0\rb(1)).
\]

\begin{prop}\label{W lift 1}
Suppose we are given a deformation condition $\mathcal{D}$  for $\rho_n$
with determinant $\epsilon$. Let $S$ be a fixed finite set of primes of $F$
including primes where $\mathcal{D}$ is ramified and all the infinite primes.
If $\delta(\mathcal{D})\geq 0$ we can find a deformation condition $\mathcal{E}$ for $
\rho_n$
with determinant $\epsilon$ such that:
\begin{itemize}
\item The local conditions $\mathcal{E}_\fq$ and $\mathcal{D}_\fq$ are the same at
primes
$\fq\in S;$
\item $\mathcal{E}_\fq$ is a substantial deformation condition for $\fq\notin S;$ and,
\item $H^1_{\{\mathbf{t}_{\mathcal{E}_\fq}^\perp\}}(F,\Ad^0\rb(1))=(0)$.

\end{itemize}
\end{prop}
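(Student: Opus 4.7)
I argue by induction on $d := \dim_{\pmb{k}} H^1_{\{\mathbf{t}_{\mathcal{D}_\fq}^\perp\}}(F,\Ad^0\rb(1))$. When $d=0$, take $\mathcal{E}=\mathcal{D}$: at primes $\fq\notin S$ the condition $\mathcal{D}_\fq$ is the substantial unramified condition of Example \ref{E1}, so the three bullet points hold. For the inductive step, pick a non-zero class $\phi$ in the dual Selmer group. The aim is to locate a single auxiliary prime $\fw\notin S$ and enrich $\mathcal{D}_\fw$ from unramified (Example \ref{E1}) to the ramified substantial condition of Example \ref{E2}, arranged so that $\phi\vert_{G_{F_\fw}}$ escapes the new perpendicular $\mathbf{t}_{\mathcal{E}_\fw}^\perp$. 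Iterating reduces $d$ to $0$, leaving $\mathcal{D}_\fq$ untouched for $\fq\in S$.

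To produce $\fw$, apply Chebotarev density to the compositum $L$ of $F(\rho_n)$, $F(\mu_{p^n})$, and the field cut out by the cocycle $\phi$. I look for a Frobenius $\sigma$ whose image in $\Gal(F(\rho_n,\mu_{p^n})/F)$ satisfies: (i) $\rb(\sigma)$ is conjugate to $\sfm{q\alpha}{0}{0}{\alpha^{-1}}$ with $\alpha^2=1$ and $q=\text{Nm}(\fw)\equiv -1\pmod p$, so that Proposition \ref{factor}(iii) applies and the ramified substantial condition of Example \ref{E2} is available at $\fw$; (ii) $\rho_n(\sigma)$ is diagonalisable over $W/p^n$ with eigenvalue ratio equal to $\chi(\sigma)\pmod{p^n}$, so that $\rho_n\vert_{G_{F_\fw}}$ automatically belongs to the target Example \ref{E2} condition; (iii) the value $\phi(\sigma)\in\Ad^0\rb(1)$ has non-zero off-diagonal component with respect to the basis diagonalising $\rb(\sigma)$. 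Hensel's lemma applied to the distinct mod-$p$ eigenvalues of $\rho_n(\frob_\fw)$ makes condition (ii) equivalent to a congruence condition on the trace of $\rho_n(\sigma)$ relative to $\chi(\sigma)$ and $\epsilon(\sigma)$.

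Having fixed $\fw$, define $\mathcal{E}$ by keeping $\mathcal{D}_\fq$ for $\fq\in S$, the unramified Example \ref{E1} condition for $\fq\notin S\cup\{\fw\}$, and the Example \ref{E2} condition at $\fw$. Both the old unramified and the new Example \ref{E2} conditions at $\fw$ are substantial, so the Wiles/Greenberg--Tate global Euler characteristic formula yields $\delta(\mathcal{E})=\delta(\mathcal{D})$, with the local tangent space at $\fw$ growing by one dimension and its orthogonal complement shrinking by one. The non-vanishing off-diagonal component of $\phi(\sigma)$ is precisely designed, via the trace pairing $\Ad^0\rb\times\Ad^0\rb(1)\to\pmb{k}(1)$, so that $\phi\vert_{G_{F_\fw}}\notin\mathbf{t}_{\mathcal{E}_\fw}^\perp$, forcing the new dual Selmer group to have strictly smaller dimension. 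This completes the inductive step.

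The principal obstacle is the simultaneous Chebotarev step: realising all three conditions on a single $\sigma$. Hypothesis A, via Proposition \ref{subgroup 2}, is essential here. Part (a) gives $\rho_n(G_F)\supset SL_2(W/p^n)$, providing enough freedom in $\Gal(F(\rho_n,\mu_{p^n})/F)$ to impose the prescribed eigenvalue pattern on $\rb(\sigma)$, the twisted-cyclotomic shape of $\rho_n(\sigma)$, and the prescribed value of $\chi(\sigma)\pmod{p^n}$ independently. Part (c) shows that the restriction map to the transvection subgroup is injective on $H^1(G,\Ad^0(i))$, hence $\phi$ remains non-zero upon restriction to $\Gal(L/F(\rho_n,\mu_{p^n}))$; that restriction is then a $\Gal(F(\rho_n,\mu_{p^n})/F)$-equivariant map of a group onto a subspace of $\Ad^0\rb(1)$, and a standard equivariant linear-algebra argument adjusts $\sigma$ within its $\Gal(L/F(\rho_n,\mu_{p^n}))$-coset to realise the required off-diagonal value of $\phi(\sigma)$.
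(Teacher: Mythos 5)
Your overall strategy---iteratively adjoining auxiliary primes $\fw\notin S$ at which one switches from the unramified condition of Example~\ref{E1} to the ramified condition of Example~\ref{E2}, chosen by Chebotarev so that a prescribed dual Selmer class misbehaves locally---is the same shape as the paper's. However, there are two substantive gaps and one minor error.

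First, tracking only the dual Selmer class $\phi$ is not enough to force the dual Selmer group to shrink. When you replace the unramified condition $L_\fw$ by the Example~\ref{E2} condition $L'_\fw$ (both $1$-dimensional in the $3$-dimensional $H^1(G_{F_\fw},\Ad^0\rb)$, so with $2$-dimensional perpendiculars $L_\fw^\perp\neq L'^\perp_\fw$), the new dual Selmer group is \emph{not} a subspace of the old one: classes whose local restriction lies in $L'^\perp_\fw\setminus L_\fw^\perp$ can enter. Arranging $\phi\vert_{G_{F_\fw}}\notin L'^\perp_\fw$ guarantees $\phi$ leaves, but the dimension could still stay the same or even grow. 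The paper avoids this by simultaneously picking a non-zero Selmer class $\xi$ (which exists because $\delta(\mathcal{D})\geq 0$ and the dual Selmer group is non-trivial) and invoking Proposition~2.2 of \cite{JM} to choose $\fr$ so that \emph{both} $\xi\vert_{G_\fr}$ and $\psi\vert_{G_\fr}$ land outside the relevant subspaces $N_1,N_2$. It is the combination that pins the dimension drop at exactly one.

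Second, your treatment of the exceptional cases ($p=3$, or $\pmb{k}=\F_5$ with projective image $A_5$) does not work. You appeal to Proposition~\ref{subgroup 2}(c) to conclude that $\phi$ restricts non-trivially to $G_K$ where $K=F(\rho_n,\mu_{p^n})$, but part (c) asserts injectivity of restriction to the transvection subgroup inside a subgroup of $GL_2(W/p^n)$---it says nothing directly about the inflation map $H^1(\Gal(K/F),\Ad^0\rb(1))\to H^1(G_F,\Ad^0\rb(1))$, whose image need not vanish in these cases. If $\phi$ is inflated, then $\phi(\sigma)$ depends only on $\sigma\vert_K$, which is essentially pinned down by your conditions (i) and (ii), so the ``adjust $\sigma$ within its $\Gal(L/K)$-coset'' step has no room to work. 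The paper disposes of exactly this obstruction by first adjoining an extra auxiliary prime $\fq_0$ with $\rho_n(\Frob_{\fq_0})$ a scalar times $\sfm{1}{1}{0}{1}$ and $\text{Nm}(\fq_0)\equiv 1\bmod p^n$, imposing the ramified Example~\ref{E2} condition there; the Selmer and dual Selmer conditions at $\fq_0$ then force any inflated class to vanish, and it is \emph{there} that Proposition~\ref{subgroup 2}(c) is applied (via the local restriction to the transvection). Your proof omits this entire step.

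Finally, a minor point: you assert that the local tangent space at $\fw$ grows by one dimension on passing from the unramified condition to Example~\ref{E2}. In fact, for $q\equiv -1\bmod p$ and $\rb(\Frob_\fw)\sim\sfm{-1}{0}{0}{1}$, both conditions have tangent space of dimension $1=\dim H^0(G_{F_\fw},\Ad^0\rb)$ (the Example~\ref{E2} ring is $W[[T_1]]$ by Proposition~\ref{factor}(iii)). The point is not a dimension change in the local condition but a change in \emph{which} $1$-dimensional subspace is selected; this is why $\delta$ is preserved, but also precisely why the single-class argument above is insufficient.
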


\begin{proof} Let $K$ be the splitting field of $\rho_n$ adjoined $p^n$-th roots
of unity. We claim that we can find elements
$g,h\in \text{Gal}(K/F)$ such that
\begin{enumerate}
\item[(R1)] $\rho_n(g)\sim\sfm{-1}{0}{0}{1}$ and $\chi(g)=-1\mod{p^n};$
\item[(R2)] $\rho_n(h)\sim a\sfm{1}{1}{0}{1}$ and $\chi(h)=1\mod{p^n}$.
\end{enumerate}
For R1, we can take $g$ to be complex conjugation. For R2, by considering
$\epsilon=\chi (\epsilon\chi^{-1})$ or otherwise, we can write
$\epsilon=\chi\epsilon_0\epsilon_1^2$ where $\epsilon_0$ is a finite order character of
order
co-prime to $p$. Our assumptions on the size of $\rb$ and $\rho_n$ (when $p=3$)   along
with Proposition \ref{subgroup 2}   imply  that the image of  the twist of $\rho_n
\otimes \epsilon_1^{-1}$
contains $SL_2(W/p^n)$. Thus we can find $h_1\in \text{Gal}(K/F)$ such that
$\rho_n(h_1)=\sfm{1}{1}{0}{1}\epsilon_1(h_1)$ and we get $\epsilon_0(h_1)\chi(h_1)=1$.
We can then take $h$ to be $h_1^{p^k-1}$ where $p^k$ is the cardinality of $\pmb{k}$.

 We first adjust  $\mathcal{D}$ and define a deformation
 condition $\mathcal{E}_0$ for $\rho_n$ with determinant $\epsilon$ as follows.
We make no change if $p\geq 5$ and the projective image of $\rb$ strictly contains
$PSL_2(\F_5);$
so $\mathcal{E}_0$ is $\mathcal{D}$. Now for the remaining cases: Suppose that either
$p=3$ or  the projective image
 of $\rb$ is $A_5$ (so $\pmb{k}$ is necessarily $\F_5$).
Using the Chebotarev Density Theorem and R2 above, we can find a prime $\fq_0\notin S$
with $\fq_0\equiv 1\mod{p^n}$  and
$\rho_n(\Frob_{\fq_0})=a\sfm{1}{1}{0}{1}$. Let
 $\mathcal{E}_0$ be the deformation condition of $\rb$ with determinant $\epsilon$
 characterized by the following local conditions:
\begin{itemize}
\item at primes $\fq\neq \fq_0$, $\mathcal{E}_{0\fq}=\mathcal{D}_\fq;$
\item at $\fq_0$, $\mathcal{E}_{0\fq_0}$ consists of deformations of the form
\[
\fm{\chi}{*}{0}{1}\epsilon'
\]
 where $\epsilon':G_{v_0}\longrightarrow W^\times$ is
unramified and $\epsilon|_{G_{\fq_0}}=\chi{\epsilon'}^2$.
\end{itemize}
By our choice of $\fq_0$, $\mathcal{E}_0$ is a deformation condition
for $\rho_n$. Further,  $\mathcal{E}_{0\fq_0}$ is a substantial deformation and all
non-zero cohomology classes in
$\mathbf{t}_{\mathcal{E}_{0\fq_0}},   \mathbf{t}_{\mathcal{E}_{0\fq_0}}^\perp$ are
ramified.

We claim that the restriction maps
\[
H^1_{\{ \mathbf{t}_{\mathcal{E}_{0\fq}}\}}(F,\Ad^0\rb)\longrightarrow H^1(G_K,\Ad^0\rb)
\quad
\text{and}\quad
 H^1_{\{\mathbf{t}_{\mathcal{E}_{0\fq}}^\perp\}}(F,\Ad^0\rb(1))\longrightarrow H^1(G_K,
\Ad^0\rb(1))\]
  are injective.
When $p\geq 5$ and the projective image of $\rb$ strictly contains $A_5$ an easy
calculation using Proposition \ref{subgroup 2} shows that
$H^1(\text{Gal}(K/F),\Ad^0\rb)$ and $H^1(\text{Gal}(K/F),\Ad^0\rb(1))$ are trivial, and
so the injectivity follows.
In the case when $p=3$ or  the projective image
 of $\rb$ is $A_5$, we argue as follows: If
 $\xi\in\ \text{ker}\left(
 H^1_{\{ \mathbf{t}_{\mathcal{E}_{0\fq}}\}}(F,\Ad^0\rb)\longrightarrow H^1(G_K,
\Ad^0\rb)\right)$, then $\xi$ is naturally
 an
 element of $H^1(\textrm{Gal}(K/F),\Ad^0\rb)$. Thus $\xi$ is unramified at $\fq_0$ and
 so the restriction of $\xi$ to
the decomposition group at $\fq_0$ must be trivial. Using Proposition \ref{subgroup 2}
it follows
that
 $\xi\in  H^1(\textrm{Gal}(K/F),\Ad^0\rb)$ is trivial. A similar argument works for
 $\Ad^0\rb(1)$.

The proof is now standard: If the dual Selmer group for $\mathcal{E}_0$ is non-trivial
then we can find
\[
0\neq \xi\in H^1_{\{ \mathbf{t}_{\mathcal{E}_{0\fq}}\}}(F,\Ad^0\rb), \quad 0\neq \psi
\in
 H^1_{\{\mathbf{t}_{\mathcal{E}_{0\fq}}^\perp\}}(F,\Ad^0\rb(1)).
 \]
  Take $g\in \text{Gal}(K/L)$ as in R1, consider
 pairs $(M_1,N_1), (M_2,N_2)$ where
 $\left\{\sfm{0}{*}{*}{0}\right\}=N_1\subset M_1=\Ad^0\rb$, $\left\{\sfm{*}{*}{0}{*}
\right\} =N_2\subset M_2=\Ad^0\rb(1)$
and apply Proposition 2.2 of \cite{JM}. One can then find a
 prime $\fr\notin S\cup\{\fq_0\}$ lifting $g$ such that the restrictions of $\xi, \psi$
to $G_\fr$ are not in
 $H^1(G_\fr,N_1), H^1(G_\fr,N_2)$.

Now take $\mathcal{E}_1$ to be the deformation condition with determinant $\epsilon$
as follows: $\mathcal{E}_1$ and $\mathcal{E}_0$ differ only at $\fr$, and at $\fr$,
the local component consists of deformations of the form
 $\sfm{\chi}{*}{0}{1}(\epsilon/\chi)^{1/2}$ considered in Example \ref{E2}. Here,
 $(\epsilon/\chi)^{1/2}$ is the unramified character determined by taking the
  square-root of $\epsilon(\Frob_\fr)\chi^{-1}(\Frob_\fr)$.
 Since $\Frob_\fr$ lifts $g$ we have $\chi(\Frob_{\fr})\equiv -1\mod{p^n}$, and
 consequently $\mathcal{E}_1$ is a substantial deformation condition for $\rho_n$.
  The rest is identical to the proof of
  Proposition 4.2, \cite{JM}: The dual Selmer group for $\mathcal{E}_1$ has dimension
one less than that of the dual
   Selmer group for
 $\mathcal{E}_0$. (Of course $\delta(\mathcal{E}_1)= \delta(\mathcal{E}_0)= \delta
(\mathcal{D})$.)
\end{proof}

We can now prove a general result for lifting a mod ${p^n}$ representation to
characteristic $0$.

\begin{thm} \label{W lift 2}
Let $\mathcal{D}$  be a deformation condition for $\rho_n$
with determinant $\epsilon$, and let $S$ be a fixed finite set of primes of $F$
including primes where $\mathcal{D}$ is ramified and all the infinite primes.
Suppose that each local component is substantial.
We can then find a deformation condition $\mathcal{E}$ for $\rho_n$
with determinant $\epsilon$ such that:
\begin{itemize}
\item The local conditions $\mathcal{E}_\fq$ and $\mathcal{D}_\fq$ are the same at
primes
$\fq\in S;$
\item Each local component is a substantial deformation condition;
\item The dual Selmer group $H^1_{\{\mathbf{t}_{\mathcal{E}_\fq}^\perp\}}(F,\Ad^0\rb
(1))$ is trivial.
\end{itemize}
$\mathcal{E}$ is a smooth deformation condition and the universal deformation ring
is a power series ring over $W$ in
$\delta(\mathcal{D})$ variables. In particular, there is
a representation $\rho:G_F\longrightarrow GL_2(W)$ of type $\mathcal{E}$ lifting
$\rho_n$.
\end{thm}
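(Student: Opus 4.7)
The plan is to bootstrap from Proposition \ref{W lift 1}: once I can kill the dual Selmer group while preserving substantiality at all finite primes, smoothness and the existence of a $GL_2(W)$-valued lift follow by a standard obstruction-theoretic argument. So the first thing to check is that the hypothesis $\delta(\mathcal{D}) \geq 0$ of Proposition \ref{W lift 1} is automatic under the substantiality assumption; the rest is then essentially bookkeeping.

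To verify this, I would apply the Greenberg--Wiles global Euler characteristic formula to write
\[
\delta(\mathcal{D}) = \dim_{\pmb{k}} H^0(F, \Ad^0\rb) - \dim_{\pmb{k}} H^0(F, \Ad^0\rb(1)) + \sum_v\!\left( \dim_{\pmb{k}} \mathbf{t}_{\mathcal{D}_{\fq}} - \dim_{\pmb{k}} H^0(F_v, \Ad^0\rb)\right).
\]
Since $\rb$ has image containing $SL_2(\pmb{k})$, both global $H^0$ terms vanish (for $p\geq 3$). At each real place, oddness of $\rb$ gives $\dim H^0(F_v,\Ad^0\rb)=1$ while $\mathbf{t}_{\mathcal{D}_v}=0$, contributing $-[F:\Q]$ in total. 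At a finite place away from $p$, substantiality gives a non-negative local contribution, and at each $v\mid p$ substantiality contributes at least $[F_v:\Q_p]$, summing to at least $[F:\Q]$. The two $[F:\Q]$ terms cancel, yielding $\delta(\mathcal{D})\geq 0$, and Proposition \ref{W lift 1} produces $\mathcal{E}$ with the first three bulleted properties.

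For smoothness, note that with the dual Selmer group trivial, the tangent space $\mathbf{t}_{\mathcal{E}}$ has dimension exactly $\delta(\mathcal{D})$ by a second application of the Euler characteristic formula (since $\delta(\mathcal{E})=\delta(\mathcal{D})$). Thus the universal deformation ring is a quotient of $W[[T_1,\ldots,T_{\delta(\mathcal{D})}]]$. To conclude it equals that power series ring, I would use the standard Mazur obstruction argument: every local component is smooth (being substantial), so local obstructions to lifting across a small surjection $A\twoheadrightarrow A/I$ vanish; the global obstruction then lies in a Selmer-type $H^2$ which, via Poitou--Tate duality applied to the chosen local conditions, is controlled by $H^1_{\{\mathbf{t}_{\mathcal{E}_{\fq}}^\perp\}}(F,\Ad^0\rb(1))=(0)$. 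Thus $R_{\mathcal{E}}\cong W[[T_1,\ldots,T_{\delta(\mathcal{D})}]]$, and the composition with $T_i\mapsto 0$ produces the required $\rho:G_F\to GL_2(W)$ lifting $\rho_n$. The main potential subtlety, which I would want to nail down carefully, is the Poitou--Tate step in the $\mod{p^n}$ setting, but since both the tangent space and the obstruction group are computed using $\Ad^0\rb$ (not $\Ad^0\rho_n$), the argument is formally identical to the familiar $\mod{p}$ case once Proposition \ref{W lift 1} has done the work of placing the constraints correctly.
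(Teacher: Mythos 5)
Your proposal is correct and follows essentially the same route as the paper: the paper's proof is a single terse remark that one must check $\delta(\mathcal{D})\geq 0$ and $\dim_{\pmb{k}}H^1_{\{\mathbf{t}_{\mathcal{E}_\fq}\}}(F,\Ad^0\rb)=\delta(\mathcal{E})=\delta(\mathcal{D})$ via Wiles' formula, which is exactly the Euler-characteristic computation you carry out. You simply fill in the details (the cancellation of the $[F:\Q]$ contributions from the archimedean places against those from places above $p$, and the standard obstruction-theoretic argument for smoothness once the dual Selmer group vanishes) that the paper leaves implicit.
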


\begin{proof} The only verification required is to check that
$\delta(\mathcal{D})\geq 0$ and that
$\text{dim}_{\pmb{k}}H^1_{\{\mathbf{t}_{\mathcal{E}_\fq}\}}(F,\Ad^0\rb)=
\delta(\mathcal{E})=\delta(\mathcal{D})$. This is done using Wiles' formula
(cf \cite[Theorem 8.6.20]{NSW}).
\end{proof}

\subsection{Modular characteristic $0$ lifts and proof of Main Theorem}
\label{modular lift}

We now look at the question of producing characteristic zero liftings which are
modular.  Given a mod ${p^n}$ Galois representation
$\rho_n:G_F\longrightarrow GL_2(W/p^n)$ with $\rho_n\mod{p}$ modular,
 when can we guarantee the existence
of a modular form $f$ with $\rho_{f,p}\mod{p^n}\sim \rho_n?$ Our answer is a modest
attempt using Theorem \ref{W lift 2} to produce a characteristic $0$ lift and then
invoking results of Skinner and Wiles \cite{SW1} to prove that it is modular.

For the rest of this section,   $F$ is a totally real field and
$\psi: G_F\longrightarrow W^\times$ is a
finite order character of $G_F$ unramified at primes dividing $p$.

\begin{prop}\label{modular lift 1}
Let $\rho_n:G_F\longrightarrow GL_2(W/p^n)$ be a  continuous odd
representation satisfying  \ref{hypothesis a}. Suppose
$\epsilon:=\psi\chi^a, a\geq 1$ lifts the determinant of $\rho_n$. Assume that:
\begin{itemize}
\item[(i)] At a prime $\fq\nmid p$ where  $\rho_n$ is ramified,  the restriction
$\rho_n|_{G_\fq}$ is substantial there
  and that a substantial deformation
condition $\mathcal{D}_\fq$ is specified for $\rho_n$.
\item[(ii)] At a prime $\fp$ dividing $p$,
\[
\rho_n|_{G_\fp}\sim \fm{\chi^a\psi_{1\fp}}{*}{0}{\psi_{2\fp}}
\]
 where $\psi_{1\fp},\psi_{2\fp}$ are
unramified,
$\chi^{a}\psi_{1\fp}\not\equiv\psi_{2\fp}\mod{p}$  and
$\chi^{a}\psi_{1\fp}\not\equiv\chi\psi_{2\fp}\mod{p}$.
\item[(iii)] There is an ordinary, parallel weight at least 2, modular form which is a
$(\psi_{2\fp}\mod{p})$- good lift of $\rho_n\mod{p}$.
\end{itemize}
There is then a modular form $f$ such that its associated $p$-adic representation
$\rho_{f,p}:G_F\longrightarrow GL_2(W)$  lifts $\rho_n$, has determinant $\psi\chi^a$,
is of   of type $\mathcal{D}_\fq$ at primes $\fq\nmid p$ where $\rho_n$ is ramified,
and
 \[
 \rho_{f,p}|_{G_\fp}\sim \fm{\psi_{1\fp}'\chi^a}{*}{0}{\psi_{2\fp}'}
 \]
  at primes $\fp|p$ with $\psi_{2\fp}'$ an unramified lift of $\psi_{2p}\mod{p}$.

\end{prop}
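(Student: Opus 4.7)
The plan is to first apply Theorem \ref{W lift 2} to construct a characteristic zero lift $\rho:G_F\longrightarrow GL_2(W)$ of $\rho_n$ with prescribed local behavior, and then to invoke the ordinary modularity lifting theorem of Skinner--Wiles \cite{SW1}, using hypothesis (iii) as the residual modularity input, to conclude that $\rho$ arises from a Hilbert modular form $f$.

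To set up the global deformation condition $\mathcal{D}$ for $\rho_n$ with determinant $\epsilon=\psi\chi^a$, I would prescribe the local components as follows. At each prime $\fq\nmid p$ where $\rho_n$ is ramified, take the given substantial $\mathcal{D}_\fq$. At each prime $\fp\mid p$, take the weight $a+1$ ordinary deformation condition from Example \ref{wt k liftings}: liftings strictly equivalent to $\fm{\widetilde\psi_{1\fp}\chi^a}{*}{0}{\widetilde\psi_{2\fp}}$ with $\widetilde\psi_{1\fp}\widetilde\psi_{2\fp}$ equal to the Teichm\"uller lift of $\overline\psi_{1\fp}\overline\psi_{2\fp}$ and $\widetilde\psi_{2\fp}$ reducing to $\overline\psi_{2\fp}\mod{p}$. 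The two non-congruence conditions in hypothesis (ii) are precisely what is needed: $\chi^a\psi_{1\fp}\not\equiv\psi_{2\fp}$ ensures that the $\overline\psi_{2\fp}$-good shape is well-defined, while $\chi^a\psi_{1\fp}\not\equiv\chi\psi_{2\fp}$ is exactly the hypothesis of Proposition \ref{weight k} guaranteeing that this local deformation condition is smooth and substantial. Theorem \ref{W lift 2}, applied with $S$ equal to the union of the ramified primes of $\mathcal{D}$ and the infinite primes, then produces a substantial global deformation condition $\mathcal{E}$ with trivial dual Selmer group, and hence a characteristic zero lift $\rho:G_F\longrightarrow GL_2(W)$ of $\rho_n$ of the required local type; in particular $\rho$ has determinant $\psi\chi^a$, is of type $\mathcal{D}_\fq$ at each ramified $\fq\nmid p$, and at each $\fp\mid p$ has the prescribed ordinary shape.

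It remains to show that $\rho$ is modular. By hypothesis (iii), there is an ordinary Hilbert modular form $g_0$ of parallel weight at least $2$ whose residual Galois representation is $\bar\rho=\rho_n\mod{p}$ and which is $\overline\psi_{2\fp}$-good at each $\fp\mid p$. Thus $\bar\rho$ is already residually modular by an ordinary form whose unramified quotient character at each $\fp\mid p$ agrees modulo $p$ with that of $\rho|_{G_\fp}$. Applying the Skinner--Wiles ordinary modularity lifting theorem of \cite{SW1} to the pair $(\rho,g_0)$ yields the desired form $f$ with $\rho_{f,p}\sim\rho$. The principal obstacle is verifying the hypotheses of \cite{SW1}: residual irreducibility (indeed, bigness) is supplied by Hypothesis A, while the crucial compatibility of ordinary data at primes above $p$---matching the unramified quotient characters of $\rho$ and of $\rho_{g_0,p}$ modulo $p$---is forced by our choice of weight $a+1$ deformation condition combined with the $\overline\psi_{2\fp}$-goodness of $g_0$ coming from (iii).
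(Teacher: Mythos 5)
Your proposal is correct and reproduces the paper's own argument essentially step for step: fix the weight-$(a+1)$ ordinary (i.e.\ $\psi_{2\fp}$-good) deformation condition at $\fp\mid p$ and the given substantial $\mathcal{D}_\fq$ at ramified $\fq\nmid p$, apply Theorem \ref{W lift 2} to produce a smooth global deformation condition $\mathcal{E}$ with trivial dual Selmer group and hence a lift $\rho:G_F\to GL_2(W)$ of $\rho_n$, and then invoke Skinner--Wiles \cite{SW1} with hypothesis (iii) supplying residual modularity. Your added commentary on the role of the two non-congruence conditions in (ii) --- the first giving $p$-distinguishedness, the second matching the smoothness hypothesis $\overline\chi^{k-1}\psi_1\neq\overline\chi\psi_2$ of Proposition \ref{weight k} --- is accurate and makes explicit what the paper leaves implicit.
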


\begin{proof}
At a prime $\fp |p$ take $\mathcal{D}_\fp$ to be the class of deformations  of the
form
\[
\fm{\psi_{1\fp}'\chi^a}{*}{0}{\psi_{2\fp}'}
\]
 where $\psi_{1\fp}'$ (resp. $\psi_{2\fp}'$) is
an unramified lifting of $\psi_{1\fp}\mod{p}$ (resp. $\psi_{2\fp}\mod{p}$), and
$\psi_{1\fp}\psi_{2\fp}=\psi$. This is a substantial deformation for $\rho_n$ at $\fp$
by Proposition \ref{weight k}. By Theorem \ref{W lift 2}, there is  a smooth
deformation
 condition $\mathcal{E}$
for $\rho_n$ which agrees with $\mathcal{D}_\fp$ at primes above $p$ and
primes where $\rho_n$ is ramified. Thus there is continuous representation
$\rho:G_F\longrightarrow GL_2(W)$ with $\rho\mod{p^n}=\rho_n$,  unramified outside
finitely many primes, determinant $\psi\chi^a$ and
$\rho|_{I_\fp}\sim \sfm{\chi^a}{*}{0}{1}$ at primes $\fp |p$. The proposition now
follows
from Skinner-Wiles \cite{SW1}.
\end{proof}

\begin{proof}[Proof of Main Theorem] Let's recall the set up: We are given
a Hilbert modular newform $f$ of weight $k\geq 2$ character $\psi$ which is ordinary
at $p$ and whose reduction mod ${p^n}$ gives
$\rho_{f,n}:G_F\longrightarrow GL_2(W/p^n)$. For each prime   $\mathfrak{p}$  of $F$ over $p,$ let $\psi_{1\mathfrak
{p}},\psi_{2\mathfrak{p}}$ be the  unramified characters such that
\[
\rho_f|_{G_\mathfrak{p}}\sim \fm{\psi_{1\mathfrak{p}}\chi^{k-1}}{*}{0}{\psi_
{2\mathfrak{p}}}.
\]
As $\psi_{1\fp}\psi_{2\fp}=\psi$ and $\psi_{2\fp}(\Frob_{\fp})=c(\fp,f)$, hypothesis
LC1 ensures  $\psi_{1\mathfrak{p}},\psi_{2\mathfrak{p}}$ are distinct modulo ${\wp}$.
From this, one deduces easily that if $f$ has a weak
companion form mod ${p^n}$ then $\rho_{f,n}$ splits at $p$. We now show that `split at
$p$' implies the existence of a weak companion form.

Let $\rho_n:=\rho_{f,n}\otimes\chi^{1-k}$, and set $\rb:=\rho_n\mod{p}$. Recall that
$k'\geq 2$ is the smallest integer satisfying the congruence
$k+k'\equiv 2\mod{(p-1)p^{n-1}}$. Define a global deformation condition $\CD$ for
$\rb\otimes \chi^{1-k}\mod{p}$  by the following requirements:
\begin{enumerate}
\item[(a)] Deformations are unramified outside primes dividing $p\fn$ and have
determinant $\psi\chi^{k'-1}$.
\item[(b)] At a prime $\fp\mid p$, the local condition $\CD_\fp$ consists of
deformations
of the form
\[
\fm{\psi_{2\fp}'\chi^{k'-1}}{*}{0}{\psi_{1\fp}'}
\]
 where $\psi_{1\fp}'$ (resp. $\psi_{2\fp}'$) is
an unramified lifting of $\psi_{1\fp}\mod{p}$ (resp. $\psi_{2\fp}\mod{p}$), and
$\psi_{1\fp}\psi_{2\fp}=\psi$.

\item[(c)] Let $\fq$ be a prime dividing $\fn$, the level of $f$. We need to
distinguish
two cases:
\begin{enumerate}
\item[(i)] If $\fq$ does not divide the conductor of $\psi$ then
$\rb\vert_{G_\fq} \sim\sfm{\overline\chi}{*}{0}{1}\bar\epsilon$
 for some character $\bar\epsilon$.  Further, hypothesis LC2 ensures that if
 $\rb\vert_{G_\fq}$ is semisimple then $\overline\chi\neq 1$. We then take $\CD_\fq$ to
be local liftings with determinant $\psi\chi^{k'-1}$ of the type considered in Example
\ref{E2}.
\item[(ii)] If $\fq$ divides the conductor of $\psi$ then  $\rho_f(I_\fq),
\rb(I_\fq)$ are finite and have the same order. In this case we take $\CD_{\fq}$  as in
Example \ref{E1} i.e.
lifts with detereminant $\psi\chi^{k'-1}$ which factor through
$G_\fq/(I_\fq\cap\ker\rb)$.

\end{enumerate}
\end{enumerate}

It then follows that $\rho_n$ is a deformation of type $\CD$ and that at each prime
$\fq\nmid p$ where $\rho_n$ is ramified the local deformation condition $\CD_\fq$ is
substantial there. As $p$ is unramified in $F$, the distinctness of
$\psi_{1\fp},\psi_{2\fp}$ modulo $p$ implies that $\rb$ satisfies hypothesis (ii)
of Proposition \ref{modular lift 1}. From the existence of mod ${p}$ companion forms,
  (\cite[Theorem 2.1]{Gee}), it follows that $\rb$ has an ordinary modular lift which is
$(\psi_{1\fp}\mod{p})_{\fp\vert p}$-good. The existence of a mod ${p^n}$ weak companion
form $g$ for $f$ of weight $k'$ character $\psi$ now follows from
Proposition \ref{modular lift 1}.
\end{proof}

\section{Checking local splitting: A computational approach} \label{splitting}

The lifting result of the previous section is not suitable for computational purposes
in general because, except in the case when dual Selmer group was already trivial,
we had no control of the level. There is, however, one case when we do have absolute control. We now describe
 this situation and go on to verify examples of local splitting.

\subsection{A special case}\label{Hecke}

Suppose $\overline\rho: G_\mathbb{Q}\longrightarrow GL_2(\pmb{k})$ is absolutely irreducible and $\CD$ is a deformation condition for $\overline\rho$ such that its tangent space is $0$ dimensional. Then the universal deformation ring $R_{\CD}$  is a quotient of $W(\pmb{k})$. If we also knew that there is a characteristic $0$ lift of type $\CD$, then we must have $R_{\CD} \simeq W$. Consequently any mod ${p^n}$ representation of type $\CD$ lifts to characteristic $0$.

The question now is: How can one check if the tangent space is $0$ dimensional? Observe that we must necessarily have exactly one characteristic $0$ lift of type $\CD$. This alone might not be enough though. For instance, $R_\CD$ might be $W[X]/(X^2)$.

To proceed further, and with the examples we have in mind, we shall assume that $\overline\rho: G_\mathbb{Q}\longrightarrow GL_2(\pmb{k})$ is an absolutely irreducible representation with determinant $\overline\chi$ such that

\begin{itemize}
\item $\rb\vert_{G_p}\sim\sfm{\overline\chi\psi^{-1}}{*}{0}{\psi}$, with $\psi$ unramified and $\psi \neq \psi^{-1}$,
\item if $q \nmid p$ then $\#\rb(I_q)|p$.
\end{itemize}
By Lemma 3.24 of \cite{DDT}, the restriction $\overline\rho|_{G_L}$ is absolutely irreducible where $L=\mathbb{Q}(\sqrt{(-1)^{(p-1)/2}}p)$.

Let $N$ be the Artin conductor of $\rb$. For an integer $k \geq 2$, let $S(k,N,\rb)$ be the (possibly empty) set of newforms of level $N$ with $\rho_f \mod{p} \simeq \rb$.

With notation as in Theorem 3.42 of \cite{DDT}, we then have an isomorphism
$R_\emptyset\xrightarrow{\sim}\mathbb{T}_\emptyset$, where $R_\emptyset$ is the
universal deformation ring for minimally ramified ordinary lifts and
$\mathbb{T}_\emptyset$ is the reduced Hecke algebra generated by the Fourier
coefficients of newforms in $S(2,N,\rb)$. In particular, the dimension of the tangent
space in the minimally ramified case is $0$ if and only if $\#S(2,N,\rb) =1$.

For $n\geq 1$ set $k_n:= (p-1)p^{n-1}-(p-1)+2$ and define a deformation condition
 $\CD_{k_n}$ for $\rb$ as follows: A lift $\rho: G_\mathbb{Q}\longrightarrow GL_2(A)$
 is a deformation of type $\CD_{k_n}$ if
\begin{itemize}
 \item  det $\rho = \chi^{k_n-1}$ and $\rho$ is unramified outside primes dividing $N$,
\item at primes $q|N$, $\rho\vert_{G_q}\sim\sfm{\chi}{*}{0}{1}$ up to twist, and
\item at $p$, $\rho\vert_{G_p}\sim\sfm{\tilde\psi^{-1}\chi^{k_n-1}}{*}{0}{\tilde\psi}$, where $\tilde\psi$ is an unramified lift of $\psi$.
\end{itemize}

Note that for $n=1$ the universal deformation ring $R_{\CD_{k_n}}$ is
 $R_\emptyset \simeq \mathbb{T}_\emptyset$. Clearly, the type $\CD_{k_n}$ deformations
 to $\pmb{k}[\epsilon]/(\epsilon^2)$  are in bijection with type $\CD_2$ deformations.
 Hence if the tangent space of $\CD_2$ has dimension $0$ then so does $\CD_{k_n}$. We
 conclude that $R_{\CD_{k_n}} \simeq W$, corresponding to a unique newform in $S(k_n,N,\rb)$.

\begin{prop}\label{split} Let $f$ be a newform of weight $k \geq 2$, level $N$, trivial
character and ordinary at $p$, such that

\begin{itemize}

\item$\rb_f$ is absolutely irreducible,

\item the conductor of $\rb_f$ is $N$,

\item $\rb\vert_{G_p}\sim\sfm{*}{*}{0}{\psi}$ with $\psi$ unramified and $\psi^2 \neq 1$,

\item if $q \nmid p$ then $\#\rb_f(I_q)|p$.
\end{itemize}
Assume that $p-1$ divides $k$ and that $f$ has exactly one companion form mod ${p}$ of level $N$. Then
$\rho_f$ splits mod ${p^n}$ iff $f$ has a companion form mod ${p^n}$ of level $N$.

\end{prop}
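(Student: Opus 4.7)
The plan is to adapt the Hecke-algebra machinery of Section~\ref{Hecke} to produce a modular characteristic $0$ lift of $\rho_f\otimes\chi^{1-k}$ taken $\mod{p^n}$. The hypothesis that $f$ has a unique $\mod p$ companion will force the relevant universal deformation ring to be exactly $W$, whence $\rho_f\otimes\chi^{1-k}\mod{p^n}$ must come from a modular form.

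The direction ($\Leftarrow$) is routine. If $g$ of weight $k'$ is a $\mod{p^n}$ companion for $f$, then $\rho_{f,n}\sim\rho_{g,n}\otimes\chi^{k-1}$ produces a second upper-triangular expression for $\rho_f|_{G_p}\mod{p^n}$ with its diagonal characters swapped relative to the original presentation. Because $\chi^{k-1}$ is non-trivial on $I_p$ modulo $p$ (as $p-1\nmid k-1$), the two diagonal characters of $\rho_f|_{G_p}$ are distinct modulo $p$, and the two upper-triangular presentations must therefore share the same unordered diagonal. This forces $\rho_f|_{G_p}$ to split $\mod{p^n}$.

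For the converse, assume $\rho_f|_{G_p}$ splits $\mod{p^n}$. Let $g_0$ denote the unique $\mod p$ companion of $f$; since $p-1\mid k$, one can take $g_0$ of weight $2$. Set $\rb:=\rb_{g_0}\sim\rb_f\otimes\overline\chi^{1-k}$; this $\rb$ satisfies the hypotheses of Section~\ref{Hecke}. Let $k'\geq 2$ be minimal with $k+k'\equiv 2\pmod{(p-1)p^{n-1}}$; then $k'\equiv 2\pmod{p-1}$. Define $\CD_{k'}$ for $\rb$ in exact analogy with $\CD_{k_n}$, but with $k_n$ replaced by $k'$. Tangent spaces are determined by data modulo $p$, and $\chi^{k'-1}\equiv\chi\pmod p$, so the tangent space of $\CD_{k'}$ coincides with that of $\CD_2$; the latter vanishes by $R_\emptyset\simeq\mathbb{T}_\emptyset$ together with $\#S(2,N,\rb)=1$. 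Hence $R_{\CD_{k'}}$ is a quotient of $W$. I would then invoke Hida's theory of ordinary families: the weight-$2$ ordinary newform $g_0$ of level $N$ lies in a Hida family whose classical weight-$k'$ specialisation $g'$ is an ordinary newform in $S(k',N,\rb)$, and $\rho_{g'}$ is a characteristic $0$ lift of $\rb$ of type $\CD_{k'}$, forcing $R_{\CD_{k'}}\simeq W$.

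Finally, I would verify that $\sigma_n:=\rho_f\otimes\chi^{1-k}$ taken $\mod{p^n}$ is itself a $\CD_{k'}$-deformation of $\rb$. The determinant is $\chi^{1-k}\equiv\chi^{k'-1}\pmod{p^n}$ by the choice of $k'$; at primes $q\mid N$ the local Steinberg-type shape $\sfm{\chi}{*}{0}{1}$ (up to twist) is preserved under the twist by $\chi^{1-k}$, which is unramified at $q$; and at $p$, the splitting hypothesis allows one to diagonalise $\sigma_n|_{G_p}$ and reorder entries into the required form $\sfm{\tilde\psi^{-1}\chi^{k'-1}}{0}{0}{\tilde\psi}$ with $\tilde\psi=\psi_{1f}$ (using $\psi_{1f}\psi_{2f}=1$ from trivial character). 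The identification $R_{\CD_{k'}}\simeq W$ then forces $\sigma_n\sim\rho_{g'}\mod{p^n}$, i.e., $g'$ is a $\mod{p^n}$ companion form of $f$. The main obstacle is producing the characteristic $0$ lift of type $\CD_{k'}$ when $k'$ is not the specific $k_n$ addressed in Section~\ref{Hecke}; Hida theory resolves this, but one must confirm that the weight-$k'$ specialisation of the Hida family of $g_0$ remains of level $N$ and respects the local conditions at primes of $N$, which is standard in this minimally ramified setting.
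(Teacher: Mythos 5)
Your proof is correct and is a faithful expansion of the paper's one-line proof, which simply invokes the discussion of Section~\ref{Hecke} after setting $\rb=\rb_f\otimes\bar\chi^{1-k}$. The two details you supply --- working with the deformation condition $\CD_{k'}$ for the minimal companion weight $k'$ rather than the specific $k_n$ (these agree only when $k\equiv p-1\pmod{(p-1)p^{n-1}}$, e.g.\ when $k=p-1$ as in the paper's examples, but not under the bare hypothesis $p-1\mid k$), and appealing to Hida families to produce the characteristic-zero modular lift that forces $R_{\CD_{k'}}\simeq W$ rather than a proper quotient of $W$ --- are exactly what is left implicit in the paper's sketch and are correctly handled.
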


\begin{proof} We need to explain why splitting implies the existence of a companion form. Set $\rb:=\rb_f\otimes \bar\chi^{1-k}$.  By hypothesis $\rb$ is modular and, in fact, $\#S(2,N,\rb) =1$. Therefore, as noted above, the tangent space of $\CD_2$ has dimension $0$. The preceding discussion thus shows that $R_{\CD_{k_n}} \simeq W$, corresponding to a unique newform $g_n$ in $S(k_n,N,\rb)$. Now, if $\rho_f$ splits mod $p^n$ then $\rho_f\otimes\chi^{1-k}$ is a deformation of type $\CD_{k_n}$ and hence $\rho_{g_n}\sim \rho_f\otimes\chi^{1-k}\mod{p^n}$.\end{proof}

\subsection{Greenberg's conjecture}

In this section we give some examples of the existence (or non-existence) of higher
companion forms. We shall restrict ourselves to the setting of classical elliptic
modular forms as we only give examples in this case.

Recall that  a   newform $f$   is said
 to have \textbf{complex multiplication}, or just CM, by a
quadratic character $\phi:G_\Q\longrightarrow \{\pm 1\}$ if
$T(q)f=\phi(\text{Frob}_{q})c(q,f)f$  for almost all primes
$q$.
We  will also refer to CM by the
corresponding quadratic extension. It is well known that a modular form has CM if and
only if its
associated $p$-adic representation is induced from an algebraic Hecke character.

Let $\CO$ be a $p$-adic integer ring  with uniformizer $\pi$ and residue field $\pmb{k}$. Suppose the newform
$f$ is $p$-ordinary and $\rho_f:G_\Q\longrightarrow GL_2(\CO)$. Then, as indicated in
the introduction, $\rho_f|G_p$ can be assumed to be upper triangular with an unramified
lower diagonal entry and this leads to the natural question of determining when
 $\rho_f$ splits at $p$.

 There is a  well-known conjectural connection between  $\rho_f$ to be split at $p$
and $f$ to have CM. The antecedents are sketchy, but Hida  \cite{Hi}   calls it
Greenberg's local non-semisimplicity conjecture; we will simply refer to it as
Greenberg's conjecture which asserts:
   \begin{quote}
   If $f$ is ordinary at $p$ and $\rho_f$ splits at $p$ then $f$ has complex
multiplication.
   \end{quote}
 This is   satisfactorily  known for modular forms over $\Q$
of weight $2$.
  For higher weights, the question remains largely unresolved although some interesting
results involving Hida families are
   shown in Ghate \cite{G}  which also has a survey of results for weight $2$. The
analogous problem for  $\Lambda$-adic  modular forms was resolved in Ghate-Vatsal
\cite{GV} by using deformation theory but similar methods appear not to bear fruit
in the classical case. Emerton \cite{Em} shows how this conjecture would follow from a
$p$-adic version of the variational Hodge conjecture. Through the main theorem and proposition \ref{split},  higher
congruence companions offer a slightly different perspective to the question of $\rho_f
$ splitting at $p$.

To describe this further, let $N$ be the level of the $p$-ordinary newform $f$. Assume
that $f$ has trivial character and has weight $p-1$. For each
positive integer $n$ we set $k_n:=p^{n-1}(p-1)-(p-1)+2$. We then
proceed as follows:
\begin{enumerate}
\item[(a)] Check that $f$ has a companion form mod ${p}$. Check congruences to make sure
that the residual representation $\rho_f\mod{\pi}$ is absolutely irreducible and that
$c(p,f)\not\equiv \pm 1 \mod{\pi}$. We can therefore write
\[
\rho_f\mod{\pi}=\fm{\overline{\chi}^{k-1}\overline{\psi}}{0}{0}{\overline{\psi}^{-1}}
\]
with $\overline{\psi}^{-1}\neq\overline{\psi}$.
    \item[(b)] In order to be able to  check fewer cases, ensure that
    \ $\rho_f$ is minimally ramified i.e. the Artin conductor of $\rho_f\mod
{\pi}$ is $N$.

\item[(c)] Set $\rb:=\rho_f\otimes\chi^{1-k}\mod{\pi}$. For each $n\geq 1$ let $\CD_{k_n}$
be the weight $k_n$, trivial character deformation condition as described in section 4.1.
 Check if the tangent spaces can be taken to be $0$ dimensional. Thus we have to check if $f$ has precisely one companion form of type $\CD_{k_1}$. We then apply proposition \ref{split} to deduce that
         $\rho_f$ splits mod ${p^n}$ if and only if $f$ has a companion form
          mod ${p^n}$
\textit{i.e.} there is a newform $g$ of level $N$, trivial character, weight $k_n$ such
that $f\equiv g\otimes \chi^{k-1}\mod{p^n}$
\end{enumerate}

We check Greenberg's conjecture explicitly for two known non-$CM$ forms of weight $4$. The computations were done on MAGMA.  In both cases $p=5$. We note that in these examples, taking $N$ to be the level of $f$, one may check its companionship with a form $g$ ``\textit{by hand}" by simply verifying the congruences $c(f,m) \equiv m^3c(g,m) \mod5^n$ for $(m,5N)=1$  up to the Sturm bound.

\begin{example}\label{level 21}  Let $f$ be the newform of weight $4$, level $21$ and trivial character with the following Fourier
expansion:

\begin{center}
$g = q - 3q^2 - 3q^3 + q^4 - 18q^5 + 9q^6 + 7q^7 + 21q^8 + 9q^9 + 54q^{10} -
36q^{11} - 3q^{12} - 34q^{13} - 21q^{14} + 54q^{15} - 71q^{16} + 42q^{17} - 27q^{18} -
124q^{19} - 18q^{20} - 21q^{21} + 108q^{22} - 63q^{24} + 199q^{25} +  102q^{26} - 27q^{27} + 7q^{28} + 102q^{29} - 162q^{30 }- 160q^{31} + 45q^{32} + 108q^{33} - 126q^{34} - 126q^{35} + 9q^{36} + 398q^{37} + 372q^{38} + 102q^{39} - 378q^{40} - 318q^{41} + 63q^{42} - 268q^{43} -
    36q^{44} - 162q^{45} + 240q^{47} + 213q^{48} + 49q^{49} - 597q^{50} +\cdots $
    \end{center}

    MAGMA outputs modulo $5$,  a unique companion form $g$  weight $2$, level $21$ and trivial character with the following Fourier expansion:

  \begin{center}
$f = q - q^2 + q^3 - q^4 - 2q^5 - q^6 - q^7 + 3q^8 + q^9 + 2q^{10} + 4q^{11} - q^{12} -
2q^{13} + q^{14} - 2q^{15} - q^{16}- 6q^{17}- q^{18} + 4q^{19} + 2q^{20} - q^{21} - 4q^{22} + 3q^{24} - q^{25} +  2q^{26} + q^{27} + q^{28} - 2q^{29}
    + 2q^{30} - 5q^{32} + 4q^{33} + 6q^{34} + 2q^{35}- q^{36} + 6q^{37} - 4q^{38} - 2q^{39 }- 6q^{40} + 2q^{41} + q^{42} - 4q^{43} - 4q^{44} - 2q^{45} - q^{48} + q^{49} + q^{50}+
\cdots$
\end{center}

Clearly there are no companions of weight $2$ and level $3 $ or $7.$
Modulo $5^2$,  $f$ has no companion forms of weight $18$, level dividing $21$ and trivial character. Thus $f$ does not split mod ${5^2}.$
\end{example}

\begin{example}\label{level 57}
Let  $f$ be the newform of weight $4$, level $57$ and trivial character with Fourier expansion

\begin{center}
$f = q - q^2 + 3q^3 - 7q^4 - 12q^5 - 3q^6 - 20q^7 + 15q^8 + 9q^9 + 12q^{10} -
4q^{11} - 21q^{12} - 76q^{13} + 20q^{14} - 36q^{15} + 41q^{16} + 22q^{17} - 9q^{18} -
19q^{19} + 84q^{20} - 60q^{21} + 4q^{22} + 82q^{23} + 45q^{24} + 19q^{25} + 76q^{26} + 27q^{27} + 140q^{28} + 242q^{29} + 36q^{30} - 126q^{31} - 161q^{32} - 12q^{33} - 22q^{34} + 240q^{35} - 63q^{36} - 180q^{37} + 19q^{38} - 228q^{39} - 180q^{40} - 390q^{41} + 60q^{42} +
    308q^{43} + 28q^{44} - 108q^{45} - 82q^{46} - 522q^{47}+ 123q^{48} + 57q^{49} - 19q^{50} + \cdots$
    \end{center}
It has a unique   mod ${5}$ companion form $g$  of weight $2$, level $57$ and trivial character with Fourier expansion

\begin{center}
$g= q - 2q^2 - q^3 + 2q^4 - 3q^5 + 2q^6 - 5q^7 + q^9 + 6q^{10} + q^{11} - 2q^{12} +
2q^{13} + 10q^{14} + 3q^{15} - 4q^{16} - q^{17} - 2q^{18} - q^{19} - 6q^{20} + 5q^{21} -
2q^{22} - 4q^{23} + 4q^{25} - 4q^{26} - q^{27} - 10q^{28}
    - 2q^{29} - 6q^{30} - 6q^{31} + 8q^{32} - q^{33} + 2q^{34}+ 15q^{35} + 2q^{36} + 2q^{38} - 2q^{39} - 10q^{42 }- q^{43} + 2q^{44} - 3q^{45} + 8q^{46} - 9q^{47} + 4q^{48 }+ 18q^{49} - 8q^{50}+\cdots$
\end{center}
and no other companions of level dividing $57.$
Modulo $5^2$, $f$ has no companion forms  of weight $18$, level dividing $57$ and trivial character.

\end{example}

\section*{Acknowledgements}
The first author is grateful to  Paul Mezo and Yuly Billig of Carleton University,
Gabor Wiese of IEM, Essen and Universit\"{a}t Regensburg  for their postdoctoral
support that brought this work to fruition.
The authors thank the referee for  valuable comments and suggestions. Thanks also to Frazer Jarvis and Neil Dummigan for  useful discussions on this work and Panagiotis Tsaknias for his  assistance with the computations.

\bibliographystyle{abbrv}

\end{document}